\newtheorem{theorem}{Theorem}
\newtheorem{cor}{Corollary}
\newfont{\OOO}{cmr10 scaled 2986}
\newfont{\OO}{cmr10 scaled 1440}
    \newcommand\cyr{%
    \renewcommand\rmdefault{wncyr}%
    \renewcommand\sfdefault{wncyss}%
    \renewcommand\encodingdefault{OT2}%
    \normalfont
    \selectfont}
    \DeclareTextFontCommand{\textcyr}{\cyr}
\newcommand{\Z}{\mathbb{Z}}
\newcommand{\R}{\mathbb{R}}
\newcommand{\C}{\mathbb{C}}
\newcommand{\T}{\mathbb{T}}
\renewcommand{\L}{\mathbb{L}}
\newcommand{\eq} [1] {\begin{equation}\label{#1}\quad}
\newcommand{\en} {\end{equation}}
\newcommand{\norm}[1]{\left\Vert#1\right\Vert}
\newcommand{\abs}[1]{\left\vert#1\right\vert}
\newcommand{\ind}{\operatorname{Ind}}
\newcommand{\diag}{\operatorname{diag}}
\renewcommand{\Re}{\operatorname{Re}}
\newcommand{\re}{\operatorname{Re}}
\newcommand{\dist}{\operatorname{dist}}
\begin{document}
\title[]{A note on the factorization of \\ some structured matrix functions}

\author[I. Spitkovsky]{Ilya M. Spitkovsky}
\address{Division of Science\\
New York  University Abu Dhabi (NYUAD)\\
Saadiyat Island\\
P.O. Box 129188\\
Abu Dhabi\\
United Arab Emirates}
\email{ims2@nyu.edu, imspitkovsky@gmail.com}
\author[A. Voronin]{Anatoly F. Voronin}
\address{Sobolev Institute of Mathematics \\
Siberian Branch, Russian Academy of Sciences \\
Academician Koptyug Str. 4 \\
Novosibirsk 630090, Russia}
\email{voronin@math.nsc.ru}

\begin{abstract}
{Let $G$ be a block matrix function with one diagonal block {$A$ being positive definite} and the off diagonal blocks complex conjugates of each other.
Conditions are obtained for $G$ to be factorable (in particular, with zero partial indices) in terms of the Schur complement of $A$.}
\end{abstract}

\thanks{The first author was supported in part by Faculty Research funding from the Division of Science and Mathematics, New York University Abu Dhabi.}

\subjclass{47A68, 30H15, 15A60}

\keywords{Riemann-Hilbert problem, canonical factorization, numerical range, Schur complement, Hankel
operator}

\maketitle

\section{Preliminary results}

Let $\L$ be a simple closed curve in the complex plane $\C$.
Denote its interior and exterior domains by $D^+$ and $D^- (\ni \infty)$ respectively. The {\em Riemann-Hilbert boundary value
problem} consists in finding functions $\phi^\pm$ analytic
in $D^\pm$ by the condition
\eq{RH} \phi^+(t)=G(t)\phi^-(t)+g(t),\quad t\in \L, \en
imposed on its boundary values. Here $G$
and $g$ are known functions defined on $\L$.

In the vector version of \eqref{RH}, $\phi^\pm$ and $g$ are vector
functions with say $n$ entries while $G$ is an $n$-by-$n$ matrix
function.

We are interested in the $L_p$ setting of \eqref{RH}. This means
that $g\in L_p(\L)$, $\phi^\pm\in E_p^\pm:=E_p(D^\pm)$ and $G\in L_\infty(\L)$ (all inclusions for vector and
matrix functions here and below are understood entrywise). {We are using the notation $E_p(A)$ for the Smirnov classes
in the domain $A$, $0<p\leq\infty$. See e.g.\cite{Dur} for the definition and properties of these classes.
Note in particular that, in the case of $\L$ being the unit circle $\T$, $E_p^\pm$ become the classical Hardy
spaces $H_p^\pm$. }

It is known (see e.g. \cite{GKS03,LS}) that for a given $p\in (1,\infty)$ problem \eqref{RH} is Fredholm if and only if $G$ admits a representation
\eq{fac} G=G_+\Lambda G_-, \en
where
\[ G_+\in E_p^+,\ G_-\in E_q^-,\ G_+^{-1}\in E_q^+,
\ G_-^{-1}\in E_p^-,\quad q=p/(p-1),\]
$\Lambda(t)=\diag[(t-z_0)^{\kappa_1},\ldots,(t-z_0)^{\kappa_n}]$,
$\kappa_1,\ldots,\kappa_n\in\Z$, $z_0$ is an arbitrarily fixed
point of $D^+$, and $G_-^{-1}\Lambda S G_+^{-1}$,
with $S$ denoting the singular integral operator with the Cauchy
kernel, is bounded as an operator on $L_p(\L)$. Representation
\eqref{fac} satisfying all these conditions is sometimes called an
$L_p$-{\em factorization} of $G$.

Note the role of the {\em partial indices} $\kappa_1,\ldots,\kappa_n$: the number $\lambda$ of linearly independent solutions of the homogenous ($g=0$) problem \eqref{RH}
is the sum of the positive $\kappa_j$, while the number $\eta$
of linear constraints on $g$ under which the non-homogenous problem admits
a solution is opposite to the sum of the negative $\kappa_j$.
In particular, the {\em index} of problem \eqref{RH}, i.e.
the difference $\lambda-\eta$, equals the {\em total index}
$\kappa=\sum_{j=1}^n\kappa_j$ of factorization \eqref{fac}.

This justifies the continuing interest in finding explicit factorization criteria, as well as formulas for the partial indices,
for various classes of matrix functions. To describe one result
in this direction, pertinent to the content of this short note, we need to recall a few more notions.

The {\em numerical range} $W(A)$ (a.k.a. the {\em field of values}, or the {\em Hausdorff set}) of a square matrix $A$ is defined and denoted as
\eq{NR} W(A)=\{x^*Ax\colon x^*x=1\}, \en
{see e.g. \cite{GlaLyu06} or \cite{GusRa}.}

A matrix function $G$ is {\em $\alpha$-sectorial} on some subset $X$ of its domain
if for some sector $\mathcal S$ with the vertex at the origin and
the angle $\alpha$ we have $W(G(t))\subset\mathcal S$ a.e. on $X$. In its turn, $G$ is {\em locally}
$\alpha$-sectorial on $X$ if for every  $t\in X$ there is a neighborhood of $t$ on which $G$ is $\alpha$-sectorial.
The respective sector ${\mathcal S}_t$ may a priori depend on $t$.

Clearly, a matrix function $G$ defined on $\L$ is locally
$\alpha$-sectorial if and only if it has the form
\eq{gchi} G=\chi G_0,\en
where $\chi$ is a continuous on $\L$ and invertible function
while $G_0$ is $\alpha$-sectorial on $\L$. Although representation
\eqref{gchi} is not unique, the winding number $\ind\chi$ of the function  $\chi$ is defined uniquely. We will call it the
winding number of W(G), and denote $\ind G$.

{The following factorability condition in terms of the numerical range behavior
was obtained in \cite{Spit80}, see also \cite[Section 3]{GKS03} for other relevant results and the history of the
subject.}
\begin{theorem}\label{th:spit} Let $G$ be an invertible in $L_\infty$ $n$-by-$n$ locally $\alpha$-sectorial matrix function defined on a smooth simple closed curve $\L$. Then $G$ admits a representation \eqref{fac} delivering an $L_p$-factorization of $G$ for all
\eq{p} p\in\left(\frac{2\pi}{2\pi-\alpha}, \frac{2\pi}{\alpha}\right).\en
Moreover, the total index $\kappa$ of
\eqref{fac} equals $n\ind G$, while for $\L$ being a circle
we further have \eq{kj} \kappa_1=\cdots = \kappa_n=\ind G. \en  \end{theorem}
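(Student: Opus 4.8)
The plan is to strip off the winding number, reduce everything to a globally $\alpha$-sectorial factor, settle that factor by a Hilbert-space coercivity argument, and only then fight for the full $L_p$-range. Concretely, I would first invoke \eqref{gchi} to write $G=\chi G_0$ with $\chi$ scalar, continuous and invertible on $\L$, $\ind\chi=\ind G$, and $G_0$ globally $\alpha$-sectorial and invertible in $L_\infty$. A scalar continuous invertible $\chi$ admits the standard factorization $\chi=\chi_+(t-z_0)^{\ind G}\chi_-$, the factors $\chi_\pm^{\pm1}$ lying in the appropriate one-sided Smirnov classes. Since $\chi_\pm$ and $(t-z_0)^{\ind G}$ are scalar, they commute through every matrix factor; hence once $G_0$ is $L_p$-factored with some partial indices $\mu_1,\dots,\mu_n$, the scalar pieces reassemble into an $L_p$-factorization \eqref{fac} of $G$ whose middle term is $\diag[(t-z_0)^{\mu_1+\ind G},\dots,(t-z_0)^{\mu_n+\ind G}]$, i.e. $\kappa_j=\mu_j+\ind G$. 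Everything thus reduces to showing that $G_0$ is $L_p$-factorable on the range \eqref{p} with $\sum_j\mu_j=0$, and that all $\mu_j=0$ when $\L=\T$.

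The conceptual heart is the case $p=2$ on the circle. Multiplying by a unimodular constant $e^{-i\theta}$, I may center the sector on $\R_+$, so that $W(F)\subset\{\,|\arg z|\le\alpha/2\,\}$ for $F:=e^{-i\theta}G_0$. Because the sector has angle $\alpha<\pi$, its vertex $0$ would be a corner of $W(F(t))$; a corner of the numerical range is an eigenvalue, so invertibility forces $0\notin\overline{W(F(t))}$, and a compactness argument gives the uniform coercivity $\re\langle F(t)x,x\rangle\ge c\|x\|^2$ for some $c>0$ and a.e. $t$. On $\T$ the Riesz projection $P_+$ is \emph{orthogonal}, so the Toeplitz operator $T_F=P_+F|_{E_2^+}$ (taken entrywise for the vector version) satisfies $\re\langle T_Ff,f\rangle=\re\langle Ff,f\rangle\ge c\|f\|^2$, and the same bound for $F^*$ (note $\re F^*=\re F$) yields it for $T_F^*$. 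Hence $T_F$ is boundedly invertible, which is equivalent to $F$, and therefore $G_0$, possessing a canonical $L_2$-factorization, i.e. all $\mu_j=0$. This is exactly where the circle is essential: on a general curve $P_+$ is bounded but not orthogonal, so $\langle T_Ff,f\rangle$ no longer collapses to $\langle Ff,f\rangle$ and the step survives only for $\L=\T$.

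The hard part will be passing from $p=2$ to the whole interval \eqref{p}, since for $p\ne2$ the non-self-adjointness of $P_+$ kills the coercivity above. My plan is to exploit that $W(F(t))$ lies in a sector of half-angle $\alpha/2<\pi/2$, so the sectorial functional calculus supplies an analytic family $z\mapsto F(t)^z$ on a strip with $\|F(t)^{is}\|$ controlled in terms of $\alpha$; building the corresponding analytic family of singular integral operators and running Stein interpolation between the $L_2$-invertibility just obtained and elementary $L_p$-bounds should give invertibility of the associated operator on $L_p$ precisely while the rotated sector fits the angular constraint of the $L_p$ Cauchy-transform theory, namely $\alpha/2<\pi/\max(p,q)$ with $q=p/(p-1)$. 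A direct computation identifies this with \eqref{p}: the interval is self-dual under $p\leftrightarrow q$, it collapses to $\{2\}$ as $\alpha\to\pi$ and opens to all of $(1,\infty)$ as $\alpha\to0$. Pinning the \emph{sharp} endpoints $2\pi/\alpha$ and $2\pi/(2\pi-\alpha)$ here, rather than a merely sufficient sub-interval, is the step I expect to demand the most care.

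Finally I would assemble the indices. On a general smooth $\L$, existence of \eqref{fac} throughout \eqref{p} follows without the coercivity step: by the Fredholm criterion recalled above, factorability is equivalent to Fredholmness of the singular integral operator, and local $\alpha$-sectoriality delivers this via the local principle (local sectoriality gives local invertibility), with the $\alpha$-dependence of the local estimates producing the same range \eqref{p}. For the total index I avoid the $\mu_j$ altogether and use $\kappa=\ind\det G$, valid since the $\pm$-factors have winding-free determinants. As $\det G=\chi^n\det G_0$ and $\log G_0$ is well defined by the sectorial calculus (the sector of angle $\alpha<\pi$ avoids a ray), $\arg\det G_0=\im\trace\log G_0$ is single-valued and confined to a bounded interval on $\L$, whence $\ind\det G_0=0$ and $\kappa=n\ind\chi+\ind\det G_0=n\ind G$. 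This yields the asserted total index in general and, together with the coercivity step, the full equalities \eqref{kj} when $\L=\T$; on a general curve the $\mu_j$ need not vanish, which is why only the total index is claimed there.
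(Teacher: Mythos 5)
First, a point of comparison: the paper does not prove Theorem~\ref{th:spit} at all --- it is quoted verbatim from \cite{Spit80} (with \cite[Section 3]{GKS03} cited for context), so there is no in-paper argument to measure your proposal against; it has to be judged against what the theorem actually requires. The parts of your proposal that are solid are the classical core. The reduction $G=\chi G_0$ with a scalar continuous invertible $\chi$ and a globally $\alpha$-sectorial $G_0$, the shift of all partial indices by $\ind\chi$, and the $p=2$, $\L=\T$ case via uniform coercivity of $\re\bigl(e^{-i\theta}G_0\bigr)$ (corner-of-the-numerical-range argument plus compactness of $\{A:\norm{A}\le M,\ \norm{A^{-1}}\le M,\ W(A)\subset\mathcal S\}$) and orthogonality of $P_+$ are all correct; this recovers the Gohberg--Krein situation and gives \eqref{kj} for $p=2$ on the circle.

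The genuine gap is exactly where the content of \cite{Spit80} lies: the passage from $p=2$ to the sharp interval \eqref{p}, and the general smooth curve. Stein interpolation, as you invoke it, transfers norm bounds, not invertibility or Fredholmness; knowing $T_F$ is invertible on $L_2$ and bounded on $L_{p_0},L_{p_1}$ does not make it invertible on intermediate spaces, and no amount of analyticity of $z\mapsto F^z$ by itself pins the endpoints $2\pi/\alpha$ and $2\pi/(2\pi-\alpha)$. The missing ingredient is quantitative: one writes the sectorial $G_0$ as a positive definite matrix function perturbed multiplicatively by a factor whose distance to the identity is controlled by $\sin(\alpha/2)$, uses that positive definite matrix functions are $L_p$-factorable with zero partial indices for \emph{all} $p\in(1,\infty)$, and then applies the stability of factorization under small perturbations measured against the sharp $L_p$-norm of the relevant singular integral projections; it is this last constant (behaving like $1/\sin(\pi/\max\{p,q\})$) that produces precisely the condition $\alpha<2\pi/\max\{p,q\}$, i.e.\ \eqref{p}. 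Your fallback for general $\L$ --- ``local sectoriality gives local invertibility via the local principle'' --- is circular as stated, since the local model is again a sectorial symbol on $L_p$, which is the very thing being proved. Finally, the total index claim $\kappa=\ind\det G$ needs justification in the $L_p$ setting for merely measurable $G_\pm$ (the determinants of the factors are not bounded outer functions in general), so that step too is a sketch rather than a proof. In short: correct skeleton and a correct $L_2$/circle case, but the theorem's actual novelty --- the sharp $p$-range and the general curve --- is not established by the proposed interpolation route.
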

\begin{cor}\label{co:can} If in the case of a circle we in addition have $\ind G=0$, then the $L_p$-factorization
\eqref{fac} of $G$ {for $p$ satisfying \eqref{p}} is {\sl canonical}, i.e. $\kappa_j=0, \ j=1,\ldots,n$.\end{cor}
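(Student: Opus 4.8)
The plan is to read the conclusion straight off Theorem \ref{th:spit}, since the corollary is the specialization of that result to a circle under the extra assumption $\ind G=0$. First I would note that all the hypotheses of Theorem \ref{th:spit} are in force verbatim: $G$ is invertible in $L_\infty$, $n$-by-$n$, and locally $\alpha$-sectorial, and the curve $\L$ is in particular a smooth simple closed curve. Hence the theorem applies and produces an $L_p$-factorization \eqref{fac} for every $p$ in the interval \eqref{p}, together with the explicit partial indices.

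The decisive input is the circle case \eqref{kj}, which asserts $\kappa_1=\cdots=\kappa_n=\ind G$. Substituting the additional hypothesis $\ind G=0$ into \eqref{kj} immediately forces $\kappa_j=0$ for every $j=1,\ldots,n$. By definition this is exactly what it means for the factorization to be \emph{canonical}: the diagonal middle factor becomes $\Lambda(t)=\diag[1,\ldots,1]$, so \eqref{fac} collapses to $G=G_+G_-$ with $G_+\in E_p^+$, $G_+^{-1}\in E_q^+$, $G_-\in E_q^-$, and $G_-^{-1}\in E_p^-$.

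There is essentially no obstacle to overcome here, and nothing to grind through: the entire content of the corollary is contained in the circle-specific formula \eqref{kj} of Theorem \ref{th:spit}. The only points worth making explicit are that the hypotheses transfer without change and that the vanishing of all partial indices is precisely the accepted meaning of canonicity, so that the proof amounts to a single substitution $\ind G=0$ into \eqref{kj}.
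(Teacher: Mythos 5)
Your proof is correct and is exactly the argument the paper intends: the corollary follows immediately from formula \eqref{kj} of Theorem~\ref{th:spit} by substituting $\ind G=0$, which is why the paper states it without a separate proof.
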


\section{Main statement}\label{s:main}

Let $G$ have the special structure
\eq{G} G =\begin{bmatrix} A & B^* \\ B & D\end{bmatrix}, \en
where {$A$ is an $m$-by-$m$ positive definite $L_\infty$-invertible matrix function while} $B, D$ are $k$-by-$m$ and $k$-by-$k$, respectively.

An important role in what follows is played by the {\em Schur complement} $\Gamma$ of the upper left block $A$ in $G$:
\eq{Ga} \Gamma=D-B{A^{-1}}B^*. \en
\begin{theorem}\label{th:can}Let $G$ be an invertible $L_\infty$ matrix function given by \eqref{G} and defined on a smooth simple closed curve $\L$. If the respective matrix function $\Gamma$ is locally $\alpha$-sectorial, with all the involved sectors ${\mathcal S}_t$ containing the positive ray, then $G$ admits an $L_p$-factorization \eqref{fac} with the zero total index, for all $p$ as in \eqref{p}. If in addition $\L$ is a circle, then factorization \eqref{fac} is canonical.
\end{theorem}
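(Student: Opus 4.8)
The plan is to peel off the positive-definite block $A$ by the Schur-complement congruence and then to check that $G$ itself satisfies the hypotheses of Theorem~\ref{th:spit}. Since $A$ is positive definite (hence $A=A^*$) and invertible in $L_\infty$, a direct computation with \eqref{Ga} gives
\[ G=L\,\diag[A,\Gamma]\,L^*,\qquad L=\begin{bmatrix} I & 0\\ BA^{-1} & I\end{bmatrix}. \]
Both $L$ and $L^{-1}$ belong to $L_\infty$, so from $\diag[A,\Gamma]=L^{-1}G(L^*)^{-1}$ and the $L_\infty$-invertibility of $G$ I first record that $\diag[A,\Gamma]$, and hence $\Gamma$, is invertible in $L_\infty$; in particular the sectorial hypothesis on $\Gamma$ is not vacuous.

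The key step is that this (merely $L_\infty$, non-analytic) congruence does not move the numerical range out of the sectors. Fix $t_0$ and a neighbourhood on which $\Gamma$ is $\alpha$-sectorial with sector $\mathcal S_{t_0}$. For a unit vector $x$ put $w=L(t)^*x$, which is nonzero since $L(t)^*$ is invertible; then
\[ x^*G(t)x=w^*\diag[A(t),\Gamma(t)]\,w=\|w\|^2\,\frac{w^*\diag[A(t),\Gamma(t)]\,w}{\|w\|^2}. \]
The fraction lies in $W(\diag[A(t),\Gamma(t)])=\operatorname{conv}\bigl(W(A(t))\cup W(\Gamma(t))\bigr)$, which sits inside $\mathcal S_{t_0}$: indeed $W(\Gamma(t))\subset\mathcal S_{t_0}$ by assumption, $W(A(t))\subset(0,\infty)\subset\mathcal S_{t_0}$ because $A(t)$ is positive definite and $\mathcal S_{t_0}$ contains the positive ray, and $\mathcal S_{t_0}$ is convex. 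As $\mathcal S_{t_0}$ is a cone and $\|w\|^2>0$, the product stays in $\mathcal S_{t_0}$, so $W(G(t))\subset\mathcal S_{t_0}$. Hence $G$ is locally $\alpha$-sectorial with the very same sectors. This is precisely where the hypothesis that every $\mathcal S_t$ contains the positive ray is used: it is what allows the positive block $A$ to be absorbed without widening the sector.

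It remains to see that $\ind G=0$. Each $\mathcal S_t$ has opening $\alpha<\pi$ (the range \eqref{p} is non-degenerate only for such $\alpha$) and contains the positive ray, so $\mathcal S_t\subset\widetilde{\mathcal S}:=\{re^{i\theta}:r>0,\ |\theta|<\alpha\}$, a fixed sector of opening $2\alpha<2\pi$. Thus $W(G(t))\subset\widetilde{\mathcal S}$ for every $t$, i.e. $G$ is globally sectorial, and taking $\chi\equiv1$ in \eqref{gchi} gives $\ind G=0$. Theorem~\ref{th:spit} now applies to the locally $\alpha$-sectorial $G$ and produces an $L_p$-factorization \eqref{fac} for all $p$ in \eqref{p}, with total index $\kappa=(m+k)\ind G=0$; when $\L$ is a circle, Corollary~\ref{co:can} upgrades this to $\kappa_1=\cdots=\kappa_{m+k}=0$, i.e.\ the factorization is canonical.

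I expect the only delicate point to be the numerical-range step above, where one must be certain that routing through the non-analytic factor $L$ neither destroys sectoriality nor changes the winding number. The cone computation settles sectoriality, and reading $\ind G=0$ off the global sector $\widetilde{\mathcal S}$ (rather than off the individual factors) keeps the non-analyticity of $L$ from ever intervening. A subsidiary point worth checking with care is the $L_\infty$-invertibility of $\Gamma$, since that is what makes $\diag[A,\Gamma]$—and with it the whole sectorial machinery—legitimate.
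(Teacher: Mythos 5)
Your proof is correct and follows essentially the same route as the paper: the Schur-complement congruence (you keep $\diag[A,\Gamma]$ where the paper normalizes to $\diag[I,\Gamma]$ via $A^{\pm 1/2}$), the observation that congruence cannot move the numerical range out of a cone containing the positive ray, and then Theorem~\ref{th:spit} with $\ind G=0$ followed by Corollary~\ref{co:can} for the circle. Your explicit $w=L(t)^*x$ computation is just an unpacking of the paper's appeal to the congruence-invariance of the minimal sector, so no substantive difference.
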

\begin{proof} The identity
\[ G= \begin{bmatrix} {A^{\frac{1}{2}}} & 0 \\ B {A^{-\frac{1}{2}}} & I\end{bmatrix}
\begin{bmatrix} I & 0 \\ 0 & \Gamma\end{bmatrix}
\begin{bmatrix}  {A^{\frac{1}{2}}} &  {A^{-\frac{1}{2}}}B^* \\ 0 & I\end{bmatrix} \]
implies that the matrices $G(t)$ and $\diag[I, \Gamma(t)]$
are congruent for all $t\in\L$.  The minimal sector with the vertex at the origin and
containing $W(G(t))$ and $W(\diag[I,\Gamma(t)])$ is therefore the
same. Since the latter numerical range is simply the convex hull of $W(\Gamma(t))$ and the point one,  we have
$W(G(\tau))\subset{\mathcal S}_t$ for $\tau$ from some neighborhood of $t$. In other words, the matrix function $G$
satisfies conditions of Theorem~\ref{th:spit}. This
guarantess the $L_p$-factorability of $G$ for all $p$ satisfying
\eqref{p}. Moreover, since the sectors ${\mathcal S}_t$ all contain the positive ray, they skip the negative one, implying $\ind G=0$. \end{proof}
This result is non-trivial even when the lower right block $D$ of \eqref{G} is one-dimensional, i.e., in the case of a scalar valued function $\Gamma$ when the (local) $\alpha$-sectoriality condition is imposed simply on its values.

Another simplification occurs when $\Gamma$ happens to be continuous.

\begin{cor}\label{co:cont}Let $G$ be an invertible $L_\infty$ matrix function given by \eqref{G}, defined on a smooth simple closed curve $\L$, and such that the respective matrix function $\Gamma$ is continuous on $\L$. Suppose that for some $\alpha<\pi$ and all $t\in\L$ the numerical range of
$\Gamma(t)$ and the positive ray both lie in the same sector ${\mathcal S}_t$ (depending on $t$) with the vertex at the origin and the angle $\alpha$. Then $G$ admits an $L_p$ factorization \eqref{fac} with the zero total index, for all $p$ as in \eqref{p}. If in addition $\L$ is a circle, then the factorization \eqref{fac} is canonical.
\end{cor}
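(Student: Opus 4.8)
The plan is to reduce Corollary~\ref{co:cont} to Theorem~\ref{th:can} by checking that the continuity hypothesis, together with the uniform sector angle $\alpha<\pi$, gives exactly the local $\alpha$-sectoriality of $\Gamma$ required there. Theorem~\ref{th:can} asks that $\Gamma$ be locally $\alpha$-sectorial with every sector ${\mathcal S}_t$ containing the positive ray; the hypothesis here already supplies, for each $t$, a sector ${\mathcal S}_t$ of angle $\alpha$ with vertex at the origin that contains both $W(\Gamma(t))$ and the positive ray. So the only gap to close is the word ``locally'': I must produce, for each fixed $t_0$, a genuine \emph{neighborhood} on which $\Gamma$ is $\alpha$-sectorial into a single fixed sector.

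First I would fix $t_0\in\L$ and the sector ${\mathcal S}_{t_0}$ of angle $\alpha$ provided by the hypothesis, which contains $W(\Gamma(t_0))$ and the positive ray. Since $\alpha<\pi$, there is room to enlarge ${\mathcal S}_{t_0}$ slightly to an open sector $\widetilde{\mathcal S}$ of angle still strictly less than $\pi$ whose interior contains the compact set $W(\Gamma(t_0))$; keeping the same bisecting direction ensures $\widetilde{\mathcal S}$ still contains the positive ray. The key step is then to invoke continuity of $\Gamma$: the numerical range $W(\Gamma(t))$ depends continuously (in the Hausdorff metric) on $\Gamma(t)$, because $W(\cdot)$ is a continuous set-valued map of the matrix entries. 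Hence for $t$ in a small enough neighborhood of $t_0$ the set $W(\Gamma(t))$ stays inside the open sector $\widetilde{\mathcal S}$. Thus $\Gamma$ is $\alpha'$-sectorial on that neighborhood for some $\alpha'<\pi$, with the single sector $\widetilde{\mathcal S}$ working for all nearby $t$, and this sector contains the positive ray.

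Having established that $\Gamma$ is locally $\alpha$-sectorial with all involved sectors containing the positive ray, I would then apply Theorem~\ref{th:can} verbatim: it yields an $L_p$-factorization \eqref{fac} of $G$ with zero total index for all $p$ satisfying \eqref{p}, and the canonical conclusion when $\L$ is a circle. I expect the only genuine obstacle to be the continuity-of-the-numerical-range step, i.e.\ making precise that small perturbations of the matrix value keep $W(\Gamma(t))$ inside a fixed open sector; this is where the compactness of $W(\Gamma(t_0))$ and the strict angle inequality $\alpha<\pi$ are both used, giving the slack needed to absorb the perturbation. Everything after that reduction is a direct citation of the preceding theorem.
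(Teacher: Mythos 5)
Your strategy---deducing the corollary from Theorem~\ref{th:can} by using the continuity of $\Gamma$ to upgrade the pointwise sector condition to local sectoriality---is exactly the intended route; the paper gives no separate argument and treats the statement as an immediate consequence of Theorem~\ref{th:can}. The continuity of the numerical range that you invoke is also correct and even quantitative: the Hausdorff distance between $W(A)$ and $W(B)$ is at most $\norm{A-B}$. There are, however, two places where the write-up as it stands does not quite close.

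First, the angle bookkeeping. After enlarging ${\mathcal S}_{t_0}$ you obtain local $\alpha'$-sectoriality with $\alpha'>\alpha$, and Theorem~\ref{th:can} applied with $\alpha'$ yields the factorization only for $p\in\left(\frac{2\pi}{2\pi-\alpha'},\frac{2\pi}{\alpha'}\right)$, a \emph{strictly smaller} interval than \eqref{p}; so you cannot then conclude ``for all $p$ satisfying \eqref{p}'' verbatim, as you claim. The repair is routine: for every $\eps>0$ the argument gives local $(\alpha+\eps)$-sectoriality (with all sectors still containing the positive ray), hence a factorization with zero total index for all $p\in\left(\frac{2\pi}{2\pi-\alpha-\eps},\frac{2\pi}{\alpha+\eps}\right)$, and the union of these intervals over $\eps>0$ is exactly \eqref{p}. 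Second, your step placing the compact set $W(\Gamma(t_0))$ in the \emph{interior} of a sector with vertex at the origin presupposes $0\notin W(\Gamma(t_0))$; if $0$ were in the numerical range, no neighborhood of $\Gamma(t_0)$ in the Hausdorff sense would stay inside a fixed sector of angle less than $\pi$, since an arbitrarily small perturbation near the vertex can produce points of any argument. This exclusion does hold, but it needs an argument: the congruence used in the proof of Theorem~\ref{th:can} gives $\det G=\det A\cdot\det\Gamma$, so invertibility of $G$ and $A$ forces $\Gamma(t_0)$ to be invertible; and if $0$ belonged to $W(\Gamma(t_0))$, it would be a corner of that numerical range (the set lies in a sector of angle $\alpha<\pi$ with vertex at $0$), hence an eigenvalue of $\Gamma(t_0)$---a contradiction. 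With these two points supplied, your proof is complete and coincides with the paper's intended derivation.
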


Of course, $\Gamma$ is continuos if $G$ itself is continuous.
In this case, however, the $L_p$-factorability of $G$ is guranteed
for all $p\in (1,\infty)$ just by the invertibility of $G$, and the
total index $\kappa$ is simply the winding number of $\det G$. So,
the only interesting aspect of Theorem~\ref{th:can} in this setting concerns
the values of the partial indices for circular $\L$.
\begin{theorem}\label{th:cir} Let $G$ be a continuos matrix function of the form \eqref{G} defined on a circle.
Suppose that
\eq{strict} \{x\in\R\colon x\leq 0\}\cap W(\Gamma(t))=\emptyset \text{ for all } t\in\T.\en Then the
partial indices of $G$ are all equal to zero.
 \end{theorem}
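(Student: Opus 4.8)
The plan is to derive Theorem~\ref{th:cir} from Corollary~\ref{co:cont}, the point being that the pointwise hypothesis \eqref{strict} already secures, after a compactness argument on $\T$, a single angle $\alpha<\pi$ for which $W(\Gamma(t))$ and the positive ray sit in a common sector $\mathcal S_t$ of opening $\alpha$. It is worth saying at the outset why the easy half of the story is not enough. Since $A(t)$ is positive definite and the factorization displayed in the proof of Theorem~\ref{th:can} gives $\det G=\det A\cdot\det\Gamma$, the total index of $G$ equals $\ind(\det\Gamma)$, and \eqref{strict} pins this down to $0$ (the eigenvalues of $\Gamma(t)$ lie in $W(\Gamma(t))$, hence stay in the slit plane $\C\setminus\{x\le 0\}$, so $\det\Gamma$ cannot wind). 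But vanishing of the \emph{total} index only yields $\sum_j\kappa_j=0$ and is perfectly consistent with a splitting such as $\kappa_1=1,\ \kappa_2=-1$. The assertion that \emph{all} partial indices vanish is the canonicity provided by Corollary~\ref{co:cont}, so the argument must run through the sectoriality hypothesis and not through the winding number alone.

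First I would record the pointwise geometry. For fixed $t$ the set $W(\Gamma(t))$ is compact and convex, and by \eqref{strict} it avoids the non-positive real axis, hence lies in the slit plane $\C\setminus\{x\le 0\}$ on which a continuous branch of $\arg$ with values in $(-\pi,\pi)$ is defined. As the continuous image of a compact connected set, $\{\arg z\colon z\in W(\Gamma(t))\}$ is a compact interval $[\theta_-(t),\theta_+(t)]\subset(-\pi,\pi)$. Because $W(\Gamma(t))$ is convex and bounded and misses the origin, it can be strictly separated from the origin by a line, which after rotation places it in a bounded subset of an open half-plane; consequently its angular width obeys $\theta_+(t)-\theta_-(t)<\pi$. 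Enlarging the interval to $[\min\{\theta_-(t),0\},\ \max\{\theta_+(t),0\}]$ keeps its length below $\pi$ (one checks the three cases according to the signs of $\theta_\pm(t)$) and now contains the angle $0$, so the corresponding sector has opening $\alpha_t<\pi$ and contains both $W(\Gamma(t))$ and the positive ray. Here it is essential that \eqref{strict} excludes the whole ray $\{x\le 0\}$ and not merely the origin: a negative real point of $W(\Gamma(t))$ would sit at argument $\pi$ and could never be placed together with the positive ray in a sector of opening $<\pi$.

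It remains to upgrade the pointwise bounds $\alpha_t<\pi$ to a uniform one. Since $\Gamma$ is continuous and $\T$ is compact, the family $t\mapsto W(\Gamma(t))$ is uniformly bounded and continuous in the Hausdorff metric, and $t\mapsto\dist\bigl(W(\Gamma(t)),\{x\le 0\}\bigr)$ is a continuous strictly positive function on $\T$, hence bounded below by some $\delta>0$. On the part of the slit plane lying at distance $\ge\delta$ from $\{x\le 0\}$ and within a fixed disk the branch of $\arg$ is uniformly continuous, so $\theta_\pm(t)$, and therefore $\alpha_t$, depend continuously on $t$. A continuous function on the compact circle attains its maximum, whence $\alpha:=\max_{t\in\T}\alpha_t<\pi$. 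With this single $\alpha$ the hypotheses of Corollary~\ref{co:cont} hold (enlarging each sector to opening exactly $\alpha$ if needed), and since $\T$ is a circle that corollary delivers a canonical $L_p$-factorization of $G$; as the partial indices of the continuous function $G$ are independent of $p$, they are all zero.

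I expect the uniformization step to be the sole genuine obstacle: one must rule out the sector angles $\alpha_t$ creeping up to $\pi$, and this is precisely where compactness of $\T$, continuity of the numerical range, and the uniform positive distance from the full ray $\{x\le 0\}$ combine. Everything else is either the algebraic congruence identity already used for Theorem~\ref{th:can} or a direct citation of Corollary~\ref{co:cont}.
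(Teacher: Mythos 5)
Your proposal is correct and follows essentially the same route as the paper: use compactness and convexity of $W(\Gamma(t))$ together with \eqref{strict} to place it in a sector of opening less than $\pi$ containing the positive ray, then invoke Corollary~\ref{co:cont} and the $p$-independence of the factorization of a continuous $G$. The only difference is one of detail -- you spell out the uniformization of $\alpha_t$ over $t\in\T$ (and the reason the total-index argument alone would not suffice), which the paper's proof leaves implicit.
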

\begin{proof}Since the numerical range is convex and compact,
condition \eqref{strict} implies that $W(\Gamma(t))$ lies in
some sector ${\mathcal S}_t$ with the angle less than $\pi$
and disjoint with the negative ray. Expanding the angle if necessary, we can still keep it under $\pi$ but have the positive ray covered. {In addition, $\Gamma(t)$ is invertible for all $t$ since $0\notin W(\Gamma(t))$, thus
implying the invertibility of $G$.} By Corollary~\ref{co:cont}, $G$ admits a canonical
$L_2$-factorization. The latter then serves as a canonical $L_p$-factorization of $G$ for all $p\in (1,\infty)$. 	
 \end{proof}

\section{Additional comments}

Sufficient conditions provided by Theorem~\ref{th:can} are far from being necessary{, even in the
special case $A=I$:
\eq{G1} G=\begin{bmatrix} I & B^* \\ B & D \end{bmatrix}. \en

} For $p=2$,
in particular, the following ``disjoint'' sufficient condition also
holds. Recall that $E^\pm_\infty+C$ (the algebraic sum of $E^\pm_\infty$ with the class $C$ of all functions continuous on $\L$) is closed in $L_\infty(\L)$ and is thus a subalgebra of $L_\infty(\L)$.

\begin{theorem}\label{th:neg} Let $G$ be an $L_\infty$ matrix function given by \eqref{G1}{, so that $\Gamma=D-BB^*$,} and defined on a simple closed smooth curve $\L$.

{\em (i)} If for some $k$-by-$m$ matrix function $B_1$ such that
$B-B_1\in E_\infty^++C$ the matrix function $\re\Gamma+B_1B_1^*$ is uniformly negative
on $\L$, then $G$ is $L_2$-factorable.

{\em (ii)} If in addtion $\L$ is a circle and $B-B_1\in E^+_\infty$, then all partial indices
of $G$ are equal to zero.  \end{theorem}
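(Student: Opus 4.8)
The plan is to recast $L_2$-factorability as a Fredholm problem for a Toeplitz operator and then exploit the block structure of \eqref{G1}. Recall that for $p=2$ a function invertible in $L_\infty$ is $L_2$-factorable exactly when the Toeplitz operator $T_G=P_+M_GP_+$ is Fredholm on the relevant Hardy space, and the factorization is canonical (all partial indices zero) exactly when $T_G$ is invertible; here $P_\pm$ are the projections associated with $S$ and $M_G$ is multiplication by $G$. I would first observe that the hypothesis forces $G\in L_\infty^{-1}$: since $\re\Gamma+B_1B_1^*\le-\delta I$ yields $\re\Gamma\le-\delta I$, each $\Gamma(t)$ is (uniformly) invertible, and with the $(1,1)$-block equal to $I$ this makes $G$ invertible in $L_\infty$. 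Writing $T_G$ blockwise with $(1,1)$-entry $T_I=I$, the operator Schur complement reduces the Fredholmness, respectively invertibility, of $T_G$ to that of
\[ \mathcal{S}:=T_D-T_BT_{B^*}. \]

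The engine is the Toeplitz product formula $T_fT_g=T_{fg}-(P_+M_fP_-)(P_-M_gP_+)$. Applied with $f=B$, $g=B^*$ and using $\Gamma=D-BB^*$ together with $P_+M_BP_-=(P_-M_{B^*}P_+)^*$, it gives
\[ \mathcal{S}=T_\Gamma+H_{B^*}^*H_{B^*},\qquad H_{B^*}:=P_-M_{B^*}P_+, \]
whose second summand is nonnegative, which is precisely the sign matching the hypothesis. I would then record the elementary estimate $\|H_{B^*}f\|^2=\|P_-M_{B^*}f\|^2\le\|M_{B^*}f\|_{L_2}^2=\langle T_{BB^*}f,f\rangle$, i.e.\ $H_{B^*}^*H_{B^*}\le T_{BB^*}$, and the same with $B_1$ in place of $B$. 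Since $B-B_1\in E_\infty^++C$, Hartman's theorem (valid on smooth $\L$) makes $H_{B^*}-H_{B_1^*}=H_{B^*-B_1^*}$ compact, so $H_{B^*}^*H_{B^*}\equiv H_{B_1^*}^*H_{B_1^*}$ modulo a compact self-adjoint $K$. Taking real parts and combining the inequalities gives
\[ \re\mathcal{S}\le T_{\re\Gamma}+T_{B_1B_1^*}+K=T_{\re\Gamma+B_1B_1^*}+K\le-\delta I+K. \]

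For part (i) I would conclude in the Calkin algebra: applying the canonical map $\pi$ to the last display yields $\re\pi(\mathcal{S})\le-\delta\mathbf{1}$, and an element of a unital C$^*$-algebra with uniformly negative real part is invertible; hence $\mathcal{S}$ is Fredholm, so $T_G$ is Fredholm and $G$ is $L_2$-factorable. For part (ii), where $\L$ is a circle and $B-B_1\in E_\infty^+$, I would remove the compact error entirely by an index-preserving similarity. Put $R:=B-B_1\in H_\infty^{k\times m}$ and multiply $G$ on the left by $\left[\begin{smallmatrix}I&0\\-R&I\end{smallmatrix}\right]\in E_\infty^+$ and on the right by $\left[\begin{smallmatrix}I&-R^*\\0&I\end{smallmatrix}\right]\in E_\infty^-$; both are invertible within $E_\infty^\pm$, so absorbing them into the factors $G_\pm$ of \eqref{fac} leaves $\Lambda$, hence the partial indices, unchanged, while a direct computation shows the result is $\widetilde G=\left[\begin{smallmatrix}I&B_1^*\\B_1&\widetilde D\end{smallmatrix}\right]$ with the \emph{same} Schur complement $\Gamma$. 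Rerunning the computation above for $\widetilde G$ now gives $\re\widetilde{\mathcal S}\le-\delta I$ with no remainder, so $\widetilde{\mathcal S}$, and therefore $T_{\widetilde G}$, is invertible; thus $\widetilde G$, and hence $G$, admits a canonical factorization, i.e.\ all partial indices vanish.

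The main difficulty I anticipate is bookkeeping rather than conceptual: getting the product formula and the adjoint relation right so that the extra term is genuinely the nonnegative $H_{B^*}^*H_{B^*}$ (the sign is exactly what the hypothesis $\re\Gamma+B_1B_1^*<0$ is built to exploit), and verifying that the triangular factors in part (ii) are admissible two-sided $E_\infty^\pm$ multipliers so that the partial indices are preserved. One must also invoke the compactness input with the correct one-sided class $E_\infty^++C$ for part (i), as opposed to the exact membership $E_\infty^+$ used in part (ii).
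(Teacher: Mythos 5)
Your overall strategy (pass to the block Toeplitz operator, take its operator Schur complement, identify the correction term as a Hankel product, and dispose of it via Hartman's theorem and a positivity estimate) is genuinely different from the paper's proof, which never leaves the symbol level: the paper conjugates $G$ by the triangular multipliers built from $X=B-B_1$ to produce the matrix function $\left[\begin{smallmatrix} I & B_1^*\\ -B_1 & -(\Gamma+B_1B_1^*)\end{smallmatrix}\right]$, whose real part $\diag[I,-(\re\Gamma+B_1B_1^*)]$ is uniformly positive, and then quotes the Gohberg--Krein case of Theorem~\ref{th:spit}. Your route is attractive because it explains \emph{why} the class $E_\infty^++C$ enters (compactness of a Hankel operator) and connects directly to the $\sigma(H_BH_B^*)$ discussion at the end of the paper. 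However, as written it has a real error, rooted in a chirality mismatch. The factorization \eqref{fac} is $G=G_+\Lambda G_-$ with the \emph{plus} factor on the left; the operator adapted to it is $P_-M_GP_-$ on the minus Hardy space (equivalently $P_++GP_-$), not $T_G=P_+M_GP_+$, which governs factorizations of the form $G_-\Lambda G_+$. With your choice of $T_G$ the Schur complement is indeed $T_\Gamma+H_{B^*}^*H_{B^*}$ with $H_{B^*}=P_-M_{B^*}P_+$, but then the difference $H_{B^*}-H_{B_1^*}=H_{B^*-B_1^*}$ has symbol $B^*-B_1^*\in\ov{E_\infty^++C}=E_\infty^-+C$, and Hartman's theorem gives compactness of $H_\phi=P_-M_\phi P_+$ only for $\phi\in E_\infty^++C$. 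These classes are different (their intersection is only $QC$), so the compactness you need in part (i) does \emph{not} follow from the hypothesis $B-B_1\in E_\infty^++C$; your argument proves the theorem with $E_\infty^-+C$ in place of $E_\infty^++C$.

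The fix is to run the same computation on $\mathcal{T}_G:=P_-M_GP_-$ acting on the minus space. Its Schur complement is $\mathcal{T}_\Gamma+H_BH_B^*$ with $H_B=P_-M_BP_+$ --- exactly the Hankel operator the paper uses in its closing remarks --- and now $H_B-H_{B_1}=H_{B-B_1}$ is compact by Hartman precisely because $B-B_1\in E_\infty^++C$, while $H_{B_1}H_{B_1}^*\le \mathcal{T}_{B_1B_1^*}$ by the same elementary estimate; part (i) then goes through in the Calkin algebra as you intended. A related inconsistency occurs in part (ii): you preserve the partial indices of \eqref{fac} by absorbing the triangular multipliers into $G_+$ and $G_-$ (correct for the left factorization), but the invertibility of $T_{\widetilde G}=P_+M_{\widetilde G}P_+$ yields a canonical factorization of the \emph{opposite} type $\widetilde G=\widetilde G_-\widetilde G_+$, which does not recombine with a left $E_\infty^+$ multiplier into the form \eqref{fac}. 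This is harmless only because your estimate there uses no analyticity at all, so it applies verbatim to $P_-M_{\widetilde G}P_-$; you should state it for that operator. Finally, note that on a general smooth curve $\L$ one should also justify the Toeplitz/factorization dictionary in the Smirnov-class setting rather than only on $\T$.
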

\begin{proof} Observe that $L_p$-factorability property
is preserved under multiplication on the left/right	by
a matrix function invertible in $E^\pm_\infty+C$. Moreover,
if these factors and their inverses are in fact in $E^\pm_\infty$ then the values of parital indices are also preserved.

Denoting $B_1-B=X$, we conclude therefore that $G$ and
\eq{-G} \begin{bmatrix} I & 0 \\ X & -I\end{bmatrix}G
 \begin{bmatrix} I & -X^* \\ 0 & I\end{bmatrix}=
\begin{bmatrix} I & B_1^* \\ -B_1 & -(\Gamma+B_1B_1^*)\end{bmatrix}	\en
are $L_p$-factorable only simultaneously in setting (i), and in addition also have the same sets of partial indices in setting (ii). But the real part of \eqref{-G} is
$\diag[I, -(\re\Gamma+B_1B_1^*)]$, and so uniformly positive under the condition imposed. A particular case of Theorem~\ref{th:spit} (corresponding to the fixed sector lying in the right half plane, {
and going back to the classical paper \cite{GoKr58})} is applicable. \end{proof}
Of course, when applying Theorem~\ref{th:neg} it makes sense to
choose $X$ as the best approximation to $B$ in {$E^+_\infty+C$ in setting (i), and } $E^+_\infty$
{in setting (ii)}; see \cite[Chapter 13]{Peller} for the pertinent discussion.

{\begin{cor}\label{co:k=1}Let in \eqref{G1} $k=1$, i.e. $B=[b_1,\ldots,b_m]$, $\L=\T$, and the (scalar valued) function {$D$} is such that \[ \sup_{t\in\T}{\left(\Re D(t)-\sum_{j=1}^m\abs{b_j(t)}^2\right)}<-\sum_{j=1}^m\dist^2(b_j,H^+_\infty). \]
Then $G$ admits a canonical $L_2$-factorization. \end{cor}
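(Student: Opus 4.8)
The plan is to deduce the corollary directly from Theorem~\ref{th:neg}, so the entire task reduces to exhibiting one row function $B_1=[c_1,\dots,c_m]$ satisfying both of its hypotheses. Since $k=1$ and $A=I$, every block is scalar: the Schur complement is $\Gamma=D-BB^*=D-\sum_{j=1}^m\abs{b_j}^2$, so $\re\Gamma=\re D-\sum_{j=1}^m\abs{b_j}^2$, and for any candidate the correction term is the nonnegative scalar $B_1B_1^*=\sum_{j=1}^m\abs{c_j}^2$. Thus both conditions of Theorem~\ref{th:neg} become scalar inequalities involving $\sum\abs{c_j}^2$.

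The natural choice, as already suggested by the remark preceding the corollary, is to take $B_1$ to be the \emph{error} of best approximation in $H^+_\infty=E^+_\infty$. Concretely, for each $j$ I would pick $x_j\in H^+_\infty$ with $\norm{b_j-x_j}_\infty=\dist(b_j,H^+_\infty)$ (such a best approximant exists since $H^+_\infty$ is weak-$*$ closed in $L_\infty$; as the hypothesis is a strict inequality, a near-best $x_j$ would serve just as well) and set $c_j=b_j-x_j$. Then $B-B_1=[x_1,\dots,x_m]\in E^+_\infty$, which is precisely the requirement of part~(ii). Moreover $\abs{c_j(t)}\le\norm{c_j}_\infty=\dist(b_j,H^+_\infty)$ for a.e.\ $t\in\T$, so pointwise
\[ B_1B_1^*=\sum_{j=1}^m\abs{c_j(t)}^2\le\sum_{j=1}^m\dist^2(b_j,H^+_\infty). \]

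It then remains to verify the uniform negativity demanded by part~(i). Adding the last bound to $\re\Gamma$ gives, for a.e.\ $t\in\T$,
\[ \re\Gamma(t)+B_1B_1^*(t)\le\Big(\re D(t)-\sum_{j=1}^m\abs{b_j(t)}^2\Big)+\sum_{j=1}^m\dist^2(b_j,H^+_\infty). \]
Taking the supremum over $t$ and invoking the standing hypothesis bounds the right-hand side by
\[ \sup_{t\in\T}\Big(\re D(t)-\sum_{j=1}^m\abs{b_j(t)}^2\Big)+\sum_{j=1}^m\dist^2(b_j,H^+_\infty)<0, \]
so $\re\Gamma+B_1B_1^*$ is uniformly negative. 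Part~(i) of Theorem~\ref{th:neg} then yields the $L_2$-factorability of $G$ (invertibility being automatic, as the congruent matrix \eqref{-G} has uniformly positive real part $\diag[I,-(\re\Gamma+B_1B_1^*)]$), while part~(ii), applicable because $\L=\T$ is a circle and $B-B_1\in E^+_\infty$, forces all partial indices to vanish. Hence the factorization is canonical.

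The only delicate point is the passage from the $L_\infty$ distance to a genuinely \emph{pointwise} estimate: what makes the argument work is that $\abs{c_j(t)}$ is controlled at every point by the single scalar $\dist(b_j,H^+_\infty)$ (which by Nehari's theorem equals the norm of the Hankel operator with symbol $b_j$), so that the scalar slack in the hypothesis exactly absorbs the full correction term $\sum\abs{c_j}^2$. Everything else is bookkeeping on top of Theorem~\ref{th:neg}, and the strictness of the assumed inequality is what guarantees there is slack to spare even if one settles for near-best approximants.
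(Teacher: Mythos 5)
Your proof is correct and follows exactly the route the paper intends: the corollary appears there without a written proof, immediately after the remark that one should take the correction to be the error of best approximation to $B$ in $H^+_\infty$, and your pointwise estimate $\sum_j\abs{c_j(t)}^2\le\sum_j\dist^2(b_j,H^+_\infty)$ together with the strict inequality in the hypothesis is precisely the bookkeeping needed to invoke both parts of Theorem~\ref{th:neg}. Nothing essential is missing.
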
}

To put things in perspective, consider the case when $\L$ is a circle and $D=BB^*+\gamma I$, $\gamma\in\C$. Then $\Gamma{=\gamma I}$
does not depend on $t$ and is a scalar multiple of the identity.
Corollary~\ref{co:cont} then implies that $G$ admits a canonical
$L_p$-factorization if
$\abs{\arg\gamma}<\frac{2\pi}{\max\{p,q\}}$. In particular, its
$L_2$-factorization exists and is canonical if $\gamma$ is not a
real non-positive number. In its turn, Theorem~\ref{th:neg} implies that a canonical $L_2$-factorization of $G$ is guaranteed
if $\gamma$ is negative and smaller than $-\norm{H_B}$, where
$H_B$ is the Hankel operator with the matrix symbol $B$. {(Recall that $H_B=P_-BP_+$,  where $P_\pm=\frac{1}{2}(I\pm S)$ are the
complimentary orthogonal projections associated with the self-adjoint involution $S$ and acting entry-wise, and that $\dist(B,H^+_\infty)=\norm{H_B}$.)}
The truth of the matter is, however, that such $G$ {is $L_2$-factorable if and only if $-\gamma$ does not belong to the essential} spectrum of $H_BH_B^*$,
{and this factorization is canonical unless $-\gamma\in\sigma(H_BH_B^*)$. This result (for $k=m$) was established in \cite{CoKraTe2}, preceded by its scalar ($k=m=1$) version in \cite{CoKraTe1}.
Note that the latter case for $\gamma=-1$ (which by scaling covers all $\gamma<0$) goes back to \cite{LitSpit80,LitSpit82}.}

\providecommand{\bysame}{\leavevmode\hbox to3em{\hrulefill}\thinspace}
\providecommand{\MR}{\relax\ifhmode\unskip\space\fi MR }
\providecommand{\MRhref}[2]{%
  \href{http://www.ams.org/mathscinet-getitem?mr=#1}{#2}
}
\providecommand{\href}[2]{#2}

\end{document}